\numberwithin{equation}{section}
\theoremstyle{plain}
\newtheorem{theorem}{Theorem}[section]
\theoremstyle{definition}
\newtheorem{remark}[theorem]{Remark}
\title[Linear differential operators generating generalised Sylvester-Kac matrices]{Linear
    differential operators with polynomial coefficients generating generalised Sylvester-Kac
    matrices}
\author[A.~Dyachenko]{Alexander Dyachenko}
\email[A.~Dyachenko]{diachenko@sfedu.ru}
\address{\smaller Keldysh Institute of Applied Mathematics, Russian Academy of Sciences, Miusskaya sq.~4, 125047 Moscow,  Russia}
\address{\smaller Department of Mathematics and Statistics, University of Konstanz, 78457 Konstanz, Germany}
\author[M.~Tyaglov]{Mikhail Tyaglov}
\email[M.~Tyaglov]{tyaglov@mail.ru}
\address{\smaller Moscow Center for Fundamental and Applied Mathematics, Moscow 119991, Russia}
\date{8 July 2021}
\begin{document}

\begin{abstract}
    A method of generating differential operators is used to solve the spectral problem for a
    generalisation of the Sylvester-Kac matrix. As a by-product, we find a linear differential
    operator with polynomial coefficients of the first order that has a finite sequence of
    polynomial eigenfunctions generalising the operator considered by M.\,Kac. In addition, we
    explain spectral properties of two related tridiagonal matrices whose shape differ from our
    generalisation.

    \bigskip
    
\noindent\textit{Keywords:} Sylvester-Kac matrix,  Eigenvalues,  Eigenvectors,
Linear differential operators.

    \medskip

\noindent\textit{MSC2010:} Primary 15A18, 47B36; Secondary 34A05,  34L10, 34L05.
\end{abstract}

\maketitle

\section{Sylvester-Kac-type matrices: historical remarks and applications}\label{section:intro}

A matrix of the form
\begin{equation}\label{Sylvester.Kac.Matrix}
K_N:=\begin{pmatrix}
  0   &   1 &  0   &\dots&   0    &   0    &   0   \\
N &   0   &  2 &\dots&   0    &   0    &   0   \\
   0   & N-1 &   0   &\dots&   0    &   0    &   0   \\
\vdots&\vdots&\vdots&\ddots&\vdots&\vdots&\vdots\\
   0   &   0   &   0   &\dots& 0 &   N-1 &0\\
   0   &   0   &   0   &\dots&2&   0    &N\\
   0   &   0   &   0   &\dots&0&1 & 0    \\
\end{pmatrix}
\end{equation}
is called the \textit{Sylvester-Kac matrix}. First time, it appeared in
an extremely short paper by J.\,Silvester~\cite{Sylvester} in 1854.
Sylvester gave its characteristic polynomial without a proof. According to T.\,Muir's
fundamental work on the history of determinants,
the first proof of Sylvester's claim was provided by F.\,Mazza in 1866~\cite[p.~442]{Muir.II}.

In XX century, the Sylvester matrix got a new life and many applications as well as the
second name, the Kac matrix. M.\,Kac~\cite{Kac} being  not aware of Sylvester's work
found the spectrum of the matrix~\eqref{Sylvester.Kac.Matrix} and its eigenvectors
by the method of generating functions. Later on, this matrix and its certain generalisations
appeared in many publications. It was rediscovered many times by many authors and by
different approaches, see~\cite{Rosza,Clement,Vincze,Edelman_Kostlan.1994}. O.\,Taussky and
J.\,Todd~\cite{TausskyTodd} gave an account of various linear algebra approaches to the study of the
Sylvester-Kac matrix and its generalisation.

Also, matrix~\eqref{Sylvester.Kac.Matrix} and its generalisations found applications in such areas as orthogonal polynomials~\cite{Askey}, linear algebra~\cite{Ikramov,Holtz.det,Chu_Wang,Bevilacqua.Bozzo},
physics~\cite{Abdel-Rehim,
Fonseca_et_al}, graph theory~\cite{Brouwer_Cohen_Neumaier}, numerical
analysis~\cite{Clement,Fernando,Munarini_Torri}, statistics~\cite{Edelman_Kostlan.1994,Edelman_et_al},
statistical mechanics~\cite{Kac,Siegert,Hess},
 biogeography~\cite{Igelnik_Simon} etc., see, e.g.,~\cite{Fonseca_et_al} for more references.

The papers~\cite{Askey,Holtz.det,Chu_Wang,Chu1,Fonseca_et_al} study 
spectral properties of various Sylvester-Kac-type matrices. The present
paper revisits and generalises some of their results 
by using a different approach. In fact, R.\,Askey~\cite{Askey}
adopted the orthogonal polynomial approach and dealt with the Krawtchouk polynomials to prove some his
results we cover here. O.\,Holtz used matrix block-triangularisation to obtain the same results
as R.\,Askey. W.\,Chu~\cite{Chu1} employed the so-called left eigenvector method to find eigenvalues
of the matrix we consider here. However, he did not find its eigenvectors. Finally, the authors
of the works~\cite{Chu_Wang,Fonseca_et_al} guessed their results and proved that their guess is
correct by direct substitutions. Our approach is more constructive.

Indeed, we consider a linear differential operator of the first order with polynomial
coefficients. Its specialisation with an infinite sequence of polynomial solutions may be
transformed into another operator of a similar kind that has an infinite sequence of rational
eigenfunctions -- some of which (finitely many) are polynomials. As a result, we obtain a linear differential operator with polynomial coefficients having a finite sequence of polynomial eigenfunctions. M.\,Kac~\cite{Kac} came to a particular case of such an operator by the method of generating functions starting from the Sylvester-Kac matrix.
In turn, our starting point is the differential operator, and we arrive at a generalisation of the the Sylvester-Kac matrix. In fact, being restricted to the space $\mathbb{C}_{N}[z]$ of all complex polynomials
of degree at most~$N$, our operator becomes finite-dimensional, and its matrix representation is
a generalised Sylvester-Kac matrix. In this way, we obtain eigenvalues and eigenvectors of
this matrix.

The paper is organised as follows. Section~\ref{section:diff.op} is devoted to a first-order linear differential operator with polynomial coefficients.
We find its eigenvalues and the corresponding eigenfunctions: they turn to be rational functions including a prescribed number of polynomials.
In Section~\ref{section:Sylvester.matrix} we represent the restriction of the aforementioned differential operator to the space $\mathbb{C}_N[z]$ of all complex polynomials of degree at most $N$, \ $N\in\mathbb{N}$, as finite tridiagonal matrix.
Then we determine the eigenvalues and eigenvectors of that matrix, which is a generalisation of
the Sylvester-Kac matrix~\eqref{Sylvester.Kac.Matrix} depending on 4 parameters.
Section~\ref{section:particular.cases} deals with particular cases of our matrix. We show that
our results cover and generalise the results of some previous publications. In
Section~\ref{section:discussion}, we discuss some future applications of our approach. Finally,
Appendix~\ref{section:appendix} provides two distinct simple proofs of the
main\footnote{Other results of~\cite{FonsecaKilic,FonKilPer20} are substantially generalised
    in~\cite{DyTy}.}
results of~\cite{FonsecaKilic,FonKilPer20} concerned with matrices similar
to~\eqref{Sylvester.Kac.Matrix}. The first proof shows that these two publications, in fact,
rediscovered certain properties of persymmetric matrices for the particular case of the
Sylvester-Kac matrix. The other proof establishes a connection to tridiagonal matrices related
to the Hahn polynomials; it additionally produces formulae for the left and right eigenvectors
of the matrices studied in~\cite{FonsecaKilic,FonKilPer20}.


\section{Spectral problem for  differential operators with polynomial coefficients}\label{section:diff.op}

Consider the differential operator
\begin{equation}\label{diff.op.classic}
L u(x)=x\,\dfrac{du(x)}{dx}
\end{equation}
acting in the space $\mathcal{S}$ of all formal power series of the form
\begin{equation}\label{formal.series}
\sum\limits_{m=-\infty}^{+\infty}a_mx^m,\quad a_k\in\mathbb{C}.
\end{equation}
It is easy to check that the eigenvalue problem
\begin{equation}\label{EV.problem.1}
L u=\lambda u,\qquad u\in\mathcal{S},
\end{equation}
has the following solutions
\begin{equation}\label{diff.op.classic.EV.sol}
\lambda_j=j,\quad u_j(x)=x^j,\qquad j=0,\pm1,\pm2,\ldots.
\end{equation}
Note that for $j\geqslant0$, the eigenfunctions $u_j(x)$ are polynomials, while
for $j<0$ they are rational functions with a unique pole of order~$-j$ at the origin.

The operator $L$ is a particular (singular) case of a more general
operator of the form
\begin{equation}\label{diff.op.classic.2.general}
\mathcal{L}u(z)=
(a+b z+c z^2)\dfrac{du(z)}{dz}+h zu(z),
\end{equation}
where $a,b,c,h\in\mathbb{C}$. However, it turns out that
the eigenvalues and eigenfunctions of~$\mathcal{L}$ in the space of formal power
series~\eqref{formal.series} can be found for certain $h$ by changing
variables in the eigenvalue problem~\eqref{EV.problem.1}.

Indeed, let us consider the eigenvalue problem~\eqref{EV.problem.1}
%
%
and make the following change of the variable
\begin{equation}\label{frac.lin.change.var}
x:=\dfrac{\alpha+\beta t}{\gamma +\delta t},\qquad \alpha\delta-\beta\gamma\neq0,
\end{equation}
that implies
\begin{equation*}
t=-\dfrac{\alpha-\gamma x}{\beta-\delta x}.
\end{equation*}
At the same time, given a fixed integer $N\geqslant1$ we also change the function~$u$ by introducing a new function
\begin{equation}\label{frac.lin.change.func}
w(t):=(\gamma+\delta t)^Nu(x),
\end{equation}
so that
\begin{equation*}
u(x)=\dfrac{w(t)}{(\gamma+\delta t)^N}.
\end{equation*}
This gives us
\begin{equation*}
x\dfrac{du(x)}{dx}=-\dfrac{(\alpha+\beta t)(\gamma+\delta t)}{\alpha\delta-\beta\gamma}\cdot\dfrac{d}{dt}
\left[\dfrac{w(t)}{(\gamma+\delta t)^N}\right]=
-\dfrac{\alpha+\beta t}{\alpha\delta-\beta\gamma}\cdot
\dfrac{(\gamma+\delta t)\,\dfrac{dw(t)}{dt}-N\delta w(t)}{(\gamma+\delta t)^{N}}.
\end{equation*}
%
Consequently,
the problem~\eqref{EV.problem.1}
transforms into a new eigenvalue problem
\begin{equation}\label{EV.problem.2}
\mathcal{L}_Nw=\mu w,\qquad w\in\mathcal{S},\quad N\in\mathbb{N},
\end{equation}
where
\begin{equation}\label{diff.op.classic.2.general.2}
\mathcal{L}_Nw(t)=(\alpha+\beta t)(\gamma+\delta t)\dfrac{dw(t)}{dt}-\beta\delta Ntw(t),\qquad N\in\mathbb{N},
\end{equation}
and
\begin{equation*}
\mu=\alpha\delta N-\lambda\mathcal{D}\qquad\text{with}\quad\mathcal{D}=\alpha\delta-\beta\gamma\neq0.
\end{equation*}

Now from~\eqref{diff.op.classic.EV.sol}, \eqref{frac.lin.change.var}, and \eqref{frac.lin.change.func} we obtain that the solutions of the eigenvalue problem~\eqref{EV.problem.2}--\eqref{diff.op.classic.2.general.2} are the following rational functions
\begin{equation}\label{frac.lin.change.operator.r=1.eigenfunc}
w_j(t)=(\alpha+\beta t)^j(\gamma+\delta t)^{N-j},\qquad j\in\mathbb{Z},
\end{equation}
corresponding to the eigenvalues
\begin{equation*}\label{frac.lin.change.operator.r=1.eigenval}
\mu_j=\alpha\delta N-\mathcal{D}j,\qquad j\in\mathbb{Z},\qquad\text{with}\quad\mathcal{D}=\alpha\delta-\beta\gamma\neq0.
\end{equation*}
%
%
%
\begin{remark}\label{remark.1}
The formula~\eqref{frac.lin.change.operator.r=1.eigenfunc} shows that for $j=0,1,\ldots,N$, the eigenvalue problem~\eqref{EV.problem.2} has polynomial eigenfunctions $w_j$. All other eigenfunctions of~\eqref{EV.problem.2} are rational functions.
\end{remark}

\section{Spectral problem for generalised Sylvester-Kac matrix}\label{section:Sylvester.matrix}

Let $\mathbb{C}_{N}[z]$, $N\in\mathbb{N}$, be the set of all polynomials with complex coefficients of degree at most $N$. It is well known that~$\mathbb{C}_{N}[z]$  is an $(N+1)$-dimensional space isomorphic to the space $\mathbb{C}^{N+1}$.

The operator $L$ defined in~\eqref{diff.op.classic} being restricted to $\mathbb{C}_{N}[z]$ has exactly $N+1$ polynomial eigenfunctions in the space $\mathbb{C}_{N}[z]$  for any $N\in\mathbb{N}$.
Remark~\ref{remark.1} says that the operator $\mathcal{L}_N$ defined in~\eqref{diff.op.classic.2.general.2} also has exactly $N+1$ eigenpolynomials. Therefore, we can restrict this operator to
$\mathbb{C}_{N}[z]$, and, in this space, $\mathcal{L}_N$ has exactly $N+1$ distinct eigenvalues and the correspondent polynomial eigenfunctions.

Let
\begin{equation}\label{Operator.restriction}
\mathcal{A}_N=\mathcal{L}_N\Bigg|_{\mathbb{C}_N[z]}.
\end{equation}
From~\eqref{diff.op.classic.2.general.2}, it follows that if
\begin{equation}\label{poly}
p(z)=a_0+a_1z+a_2z^2+\cdots+a_Nz^N\in\mathbb{C}_N[z],
\end{equation}
then
\begin{equation*}
(\alpha+\beta z)(\gamma+\delta z)\dfrac{dp(z)}{dz}-N\beta \delta p(z)=
\left[Na_N(\alpha\delta+\beta\gamma)-\beta\delta\cdot a_{N-1}\right]z^N+O\left(z^{N-1}\right)
\quad\text{as}\quad z\to\infty,
\end{equation*}
so
$\mathcal{L}_N p\in\mathbb{C}_N[z]$ for any $p\in\mathbb{C}_N[z]$.  Thus, we have
\begin{equation*}
\mathcal{A}_N:\mathbb{C}_N[z]\to\mathbb{C}_N[z].
\end{equation*}
%

Consequently, $\mathcal{A}_N$ is a finite-dimensional operator, and the eigenvalue problem
\begin{equation*}
\mathcal{A}_Nv=\mu v
\end{equation*}
has exactly $N+1$ linearly independent polynomial eigenfunctions
\begin{equation}\label{frac.lin.change.operator.eigenfunc}
w_j(z)=(\alpha+\beta z)^j(\gamma+\delta z)^{N-j},\qquad j=0,1,\ldots,N,
\end{equation}
corresponding to the eigenvalues
\begin{equation}\label{frac.lin.change.operator.eigenval}
\mu_j=\alpha\delta N-\mathcal{D}j,\qquad j=0,1,\ldots,N,\qquad\text{with}\quad\mathcal{D}=\alpha \delta-\beta\gamma\neq0.
\end{equation}

On the other hand, the operator $\mathcal{A}_N$ can be represented as an $(N+1)\times(N+1)$ matrix. Namely, for the polynomial~$p$ defined by~\eqref{poly}, let us consider the (column) vector $v=(a_0,a_1,\ldots,a_N)^{T}$ of its coefficients (here~``\,$T$\,'' stands for the transpose). Then there exists a matrix $J_N$ such that
$J_N\,v=(b_0,b_1,\ldots,b_N)^{T}$ is the vector of the coefficients of the polynomial $\mathcal{A}_Np$. From~\eqref{diff.op.classic.2.general.2},
\eqref{Operator.restriction}, and~\eqref{poly}, one gets
\begin{equation*}
\begin{array}{l}
b_0=\alpha\gamma\cdot a_1,\\[5pt]
b_1=-N\beta\delta\cdot a_0+(\alpha\gamma+\beta\delta)a_1+2\alpha\gamma\cdot a_2,
\\[5pt]
\cdots\cdots\cdots\cdots\cdots\cdots\cdots\cdots\cdots\cdots\cdots\cdots\cdots\cdots\cdots\cdots\cdots\cdots
\\[5pt]
b_k=-(N-k+1)\beta\delta\cdot a_{k-1}+k(\alpha\gamma+\beta\delta)a_k+(k+1)\alpha\gamma\cdot a_{k+1},
\\[5pt]
\cdots\cdots\cdots\cdots\cdots\cdots\cdots\cdots\cdots\cdots\cdots\cdots\cdots\cdots\cdots\cdots\cdots\cdots
\\[5pt]
b_{N-1}=-2\beta\delta\cdot a_{N-2}+(N-1)(\alpha\gamma+\beta\delta)a_{N-1}+N\alpha\gamma\cdot a_{N},
\\[5pt]
b_{N}=-\beta\delta\cdot a_{N-1}+N(\alpha\gamma+\beta\delta)a_{N}.
\end{array}
\end{equation*}

Thus, the matrix
\begin{equation}\label{diff.op.classic.2.matrix.general.2}
J_{N}=
\begin{pmatrix}
  0   &   \alpha\gamma &  0   &\dots&   0    &   0    &   0   \\
-N\beta\delta\!\! &   \alpha\delta+\beta\gamma   &  2\alpha\gamma &\dots&   0    &   0    &   0   \\
   0   & \!\!-(N-1)\beta\delta &  \!2(\alpha\delta+\beta\gamma)\!\!   &\dots&   0    &   0    &   0   \\
\vdots&\vdots&\vdots&\ddots&\vdots&\vdots&\vdots\\
   0   &   0   &   0   &\dots& \!\!(N-2)(\alpha\delta+\beta\gamma)\!\!\! &   (N-1)\alpha\gamma &0\\
   0   &   0   &   0   &\dots&-2\beta\delta&  \!\!(N-1)(\alpha\delta+\beta\gamma)\!\!\!  &N\alpha\gamma\\
   0   &   0   &   0   &\dots&0&-\beta\delta & \!\!N(\alpha\delta+\beta\gamma)    \\
\end{pmatrix},
\end{equation}
is the matrix representation of the operator $\mathcal{A}_N$ in $\mathbb{C}^{N+1}$ in the canonical basic. Consequently, $J_N$ has
the eigenvalues~\eqref{frac.lin.change.operator.eigenval}, and the correspondent eigenvectors
are the vectors of the coefficients of the polynomials~\eqref{frac.lin.change.operator.eigenfunc}.
We therefore arrive at the following theorem.
\begin{theorem}
Under the conditions that~$\alpha$, $\beta$, $\gamma$, $\delta\neq0$ and~$\alpha\delta\neq \beta\gamma$, the eigenvalues of the matrix $J_N$ defined by~\eqref{diff.op.classic.2.matrix.general.2}
are
\begin{equation}\label{frac.lin.change.matrix.eigenval}
\mu_j=\alpha\delta(N-j)+\beta\gamma\cdot j,\qquad j=0,1,\ldots,N,
\end{equation}
and $v_j=(v_{0j},v_{1j},\ldots,v_{Nj})^{T}$ is the eigenvector corresponding to $\mu_j$, where
\begin{equation}\label{frac.lin.change.matrix.eigenvectors}
v_{kj}=\sum\limits_{i=0}^{\min(k,j)}\binom{j}{i}\binom{N-j}{k-i}\left(\dfrac{\delta}{\gamma}\right)^{k-i}\left(\dfrac{\beta}{\alpha}\right)^{i},\qquad k=0,1,\ldots,N.
\end{equation}
\end{theorem}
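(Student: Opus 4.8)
The plan is to build directly on the discussion preceding the statement. We already know that $J_N$ represents the operator $\mathcal{A}_N$ in the monomial basis, that $\mathcal{A}_N$ has the $N+1$ polynomial eigenfunctions $w_j(z)=(\alpha+\beta z)^j(\gamma+\delta z)^{N-j}$, and that these belong to the eigenvalues $\mu_j=\alpha\delta N-\mathcal{D}j$ with $\mathcal{D}=\alpha\delta-\beta\gamma$. Rewriting the eigenvalue is then immediate: substituting $\mathcal{D}=\alpha\delta-\beta\gamma$ gives $\mu_j=\alpha\delta N-(\alpha\delta-\beta\gamma)j=\alpha\delta(N-j)+\beta\gamma j$, which is~\eqref{frac.lin.change.matrix.eigenval}. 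Since $\mathcal{D}\neq0$, the numbers $\mu_0,\dots,\mu_N$ are pairwise distinct, so every eigenspace of $J_N$ is one-dimensional and it suffices to produce a single eigenvector per $\mu_j$.

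For the eigenvectors I would expand each factor of $w_j$ by the binomial theorem and form the Cauchy product. Extracting the coefficient of $z^k$, call it $[z^k]w_j(z)$, yields $\sum_i\binom{j}{i}\binom{N-j}{k-i}\alpha^{j-i}\beta^i\gamma^{N-j-k+i}\delta^{k-i}$; the terms with $k-i>N-j$ drop out because the second binomial coefficient then vanishes, so the admissible range collapses to $0\le i\le\min(k,j)$. Factoring the common power $\alpha^j\gamma^{N-j}$ out of this coefficient converts the remaining sum into precisely $\sum_i\binom{j}{i}\binom{N-j}{k-i}(\delta/\gamma)^{k-i}(\beta/\alpha)^i=v_{kj}$ from~\eqref{frac.lin.change.matrix.eigenvectors}. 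Thus $v_{kj}=\alpha^{-j}\gamma^{-(N-j)}[z^k]w_j(z)$, so the vector $v_j$ is merely a nonzero rescaling of the coordinate vector of the eigenpolynomial $w_j$ and is therefore an eigenvector of $J_N$ belonging to $\mu_j$.

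The main obstacle is purely bookkeeping: keeping the two summation indices, the vanishing-binomial truncation of the lower limit, and the normalising power $\alpha^{-j}\gamma^{-(N-j)}$ all consistent. There is no analytic difficulty once the eigenpolynomials of Section~\ref{section:diff.op} are in hand. The role of the hypotheses is transparent from this computation: $\alpha,\gamma\neq0$ make the normalising scalar and the ratios $\delta/\gamma$, $\beta/\alpha$ meaningful, while $\alpha\delta\neq\beta\gamma$ (that is, $\mathcal{D}\neq0$) guarantees the simplicity of the spectrum and hence that the listed $v_j$ exhaust the eigenvectors.
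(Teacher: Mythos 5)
Your proposal is correct and follows essentially the same route as the paper: both read off the eigenvalues from $\mu_j=\alpha\delta N-\mathcal{D}j$ and obtain the eigenvector entries by binomially expanding $w_j(z)=(\alpha+\beta z)^j(\gamma+\delta z)^{N-j}$, extracting the coefficient of $z^k$, and normalising by $\alpha^j\gamma^{N-j}$. Your added observation that $\mathcal{D}\neq0$ makes the $\mu_j$ pairwise distinct (so the listed eigenvectors are exhaustive) is a small but sound supplement to the paper's argument.
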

\begin{proof}
The formula~\eqref{frac.lin.change.matrix.eigenval} follows from~\eqref{frac.lin.change.operator.eigenval}.
The formula~\eqref{frac.lin.change.matrix.eigenvectors} follows from the fact that the eigenpolynomials~\eqref{frac.lin.change.operator.eigenfunc}
of the operator $\mathcal{A}_N$ can be represented in the form
\[
    w_j(z)=(\alpha+\beta z)^j(\gamma+\delta z)^{N-j}
    =\alpha^j\gamma^{N-j}\sum_{i=0}^j\sum_{m=0}^{N-j}\binom{j}{i}\binom{N-j}{m}
    \left(\frac\beta\alpha\right)^{i}\left(\frac\delta\gamma\right)^{m} z^{i+m}
    ,
\]
which after a change of the summation index turns into
\begin{equation*}
\frac{w_j(z)}{\alpha^j\gamma^{N-j}}=\sum\limits_{k=0}^N\sum\limits_{i=0}^{\min(k,j)}\binom{j}{i}\binom{N-j}{k-i}\left(\dfrac{\delta}{\gamma}\right)^{k-i}\left(\dfrac{\beta}{\alpha}\right)^{i}z^k=\sum\limits_{k=0}^Nv_{kj}z^k.
\end{equation*}
\end{proof}

\begin{remark}
The case when at least one of the numbers $\alpha$, $\beta$, $\gamma$, $\delta$ equals zero (with $\alpha\delta-\beta\gamma\neq0$) is not very interesting
from the matrix point of view, since the matrix~\eqref{diff.op.classic.2.matrix.general.2} is triangular in this case.

Regarding the differential operator $\mathcal{L}_N$ defined in~\eqref{diff.op.classic.2.general.2}, for~$\beta=0$ or $\delta=0$ it degenerates (up to a linear change of the variable) to the operator~$L$ of the form~\eqref{diff.op.classic}. The case $\alpha=0$ or $\gamma=0$ with $\beta\delta\neq0$ can be transformed by a linear change of the variable into the generic case when none of the numbers $\alpha,\beta,\gamma,\delta$ in the operator~$\mathcal{L}_N$ is zero.
\end{remark}

\begin{remark}\label{rem.degenerate.case}
If~$\mathcal{D}=\alpha\delta-\beta\gamma=0$, we cannot use the linear-fractional transform as in~\eqref{frac.lin.change.var}. The operator~$\mathcal{L}_N$ defined by~\eqref{diff.op.classic.2.general.2} then has cases depending on whether~$\beta\delta=0$ or not.

If~$\beta\delta=0$, then (unless~$\mathcal{L}_N$ is trivial) the condition~$\mathcal{D}=0$ implies\footnote{Otherwise, the leading polynomial coefficient of the operator~\eqref{diff.op.classic.2.general.2} becomes zero.} that~$\beta=\delta=0$, and hence
\[
\mathcal{L}_Nw(z)=\alpha\gamma\dfrac{dw(z)}{dz}.
\]
Here the only eigenpolynomial is~$w_0(z)\equiv1$, and the corresponding eigenvalue is~$\mu_0=0$. In this case, the matrix of the operator~$\mathcal{A}_N$ has one nontrivial diagonal, namely the superdiagonal; the unique eigenvalue~$\mu_0$ of~$\mathcal{A}_N$ is of algebraic multiplicity $N+1$ and of geometric multiplicity~$1$.

If $\beta\delta\ne0$, then~$(\gamma+\delta z)=\frac{\delta}{\beta}(\alpha+\beta z)$, and hence
\[
\mathcal{L}_Nw(z)
=\frac{\delta}{\beta}(\alpha+\beta z)^2\dfrac{dw(z)}{dz}-\beta\delta Nzw(z).
\]
So, on letting~$t=(\alpha+\beta z)$ and~$p(t)=w(z)$ the eigenproblem~$\mathcal{L}_Nw(z)=\mu w(z)$ transforms into
\[
\delta t^2\dfrac{dp(t)}{dt}-\delta N t p(t) =(\mu-\alpha\delta N) p(t)
.
\]
An examination of the coefficients of this equality near the highest and lowest powers of~$t$ shows that it may only hold when~$\deg p =N$, and only when~$\mu=\alpha\delta N$. However, these two restrictions imply that~$p(t)=t^N$ up to a normalisation. Accordingly, the only eigenpolynomial of~$\mathcal{L}_N$ in this case is~$w_0(z)=(\alpha+\beta z)^N$, which corresponds to the eigenvalue~$\mu_0=\alpha\delta N$. The matrix of the operator~$\mathcal{A}_N$ also has a unique eigenvalue~$\mu_0$ of algebraic multiplicity $N+1$ and of geometric multiplicity~$1$. The characteristic polynomial of $J_N$ for the specific case~$\alpha=-\beta=-1/2$ and~$\gamma=-\delta=1$ was found by L.\,Painvin in 1858, see~\cite[p.~434]{Muir.II}.
\end{remark}

\section{Particular cases}\label{section:particular.cases}

\noindent In this section, we consider particular cases of the matrix $J_N$ defined in~\eqref{diff.op.classic.2.matrix.general.2}.

Given~$a,b,c\in\mathbb{C}$, ~$c\neq0$, let us set
\begin{equation}\label{change.variable}
\alpha:=\dfrac{b-\sqrt{D}}{4c},\quad \beta:=\dfrac12,\quad \gamma:=b+\sqrt{D},\quad \delta:=2c,\quad\text{where}\quad
D=b^2-4ac.
\end{equation}
Then the matrix~\eqref{diff.op.classic.2.matrix.general.2} gets the form
\begin{equation}\label{diff.op.classic.2.matrix.general}
B_{N}(a,b,c)=
\begin{pmatrix}
  0   &   a &  0   &\dots&   0    &   0    &   0   \\
-Nc &   b   &  2a &\dots&   0    &   0    &   0   \\
   0   & -(N-1)c &   2b   &\dots&   0    &   0    &   0   \\
\vdots&\vdots&\vdots&\ddots&\vdots&\vdots&\vdots\\
   0   &   0   &   0   &\dots& (N-2)b &   (N-1)a &0\\
   0   &   0   &   0   &\dots&-2c&   (N-1)b    &Na\\
   0   &   0   &   0   &\dots&0&-c & Nb    \\
\end{pmatrix}.
\end{equation}
It represents the differential operator
\begin{equation}\label{diff.op.classic.2.general.1}
L_{a,b,c}u(z)=
(a+b z+c z^2)\dfrac{du(z)}{dz}-Nc zu(z)
\end{equation}
restricted to $\mathbb{C}_N[z]$.

\begin{remark}\label{Remark.non.zero.coeff.r=1}
In~\eqref{diff.op.classic.2.general.1} we additionally suppose that $a\neq0$, since the case $a=0$ can be transformed into the generic case ($a\neq0$) by a linear change of the variable $z$.
\end{remark}

The expressions~\eqref{frac.lin.change.matrix.eigenval}--\eqref{frac.lin.change.matrix.eigenvectors} and~\eqref{change.variable} imply that the matrix~\eqref{diff.op.classic.2.matrix.general} has the following  eigenvalues:
\begin{equation}\label{diff.op.classic.2.eigenvalues.general}
\lambda_{j}=j\cdot\dfrac{b+\sqrt{b^2-4ac}}{2}+(N-j)\cdot\dfrac{b-\sqrt{b^2-4ac}}{2},\quad j=0,1,\ldots,N,
\end{equation}
and the correspondent eigenvectors $v_j=(v_{0j},v_{1j},\ldots,v_{Nj})^{T}$ are given by
\begin{equation}\label{diff.op.classic.2.solution.coeff.general}
v_{kj}=\left(\dfrac{2c}{b+\sqrt{D}}\right)^k\cdot
\sum\limits_{i=0}^{\min(k,j)}\binom{j}{i}\binom{N-j}{k-i}\left(\dfrac{b+\sqrt{D}}{b-\sqrt{D}}\right)^i,\quad j=0,1,\ldots,N,
\end{equation}
where $D$ is defined in~\eqref{change.variable}.

The (rational) eigenfunctions of the operator~\eqref{diff.op.classic.2.general.1} in the space $\mathcal{S}$ corresponding the
eigenvalues~\eqref{diff.op.classic.2.eigenvalues.general} for $j\in\mathbb{Z}$ are the following

\begin{equation}\label{diff.op.classic.2.solution.general}
Q_j(z)=\dfrac{(2c)^N}{\big(\sqrt{D}-b\big)^j\big(\sqrt{D}+b\big)^{N-j}}\left(z-\dfrac{\sqrt{D}-b}{2c}\right)^{j}
\left(z+\dfrac{\sqrt{D}+b}{2c}\right)^{N-j}, \qquad j\in\mathbb{Z}.
\end{equation}

\vspace{3mm}

Let us list some particular cases of the matrix $B_{N}(a,b,c)$ considered considered in literature.

\begin{itemize}
\item[1)] The case $b=0$, $a=-c=1$ or $\alpha=\beta=\gamma=1$, $\delta=-1$, corresponds to the Sylvester-Kac matrix~\cite{Kac,Chu_Wang,Askey,TausskyTodd,Holtz.det,Muir.II,Rosza,Vincze}.
\item[2)] According to T.\,Muir~\cite[p.~434]{Muir.II}, the case $b=1$, $a+c=1$ or $\alpha\delta+\beta\gamma=\alpha\gamma+\beta\delta=1$ was first considered by L.\,Painvin in 1858 for eigenvalues (see also~\cite{Askey,Holtz.det}). W.\,Chu and X.\,Wang~\cite{Chu_Wang} found eigenvectors for this matrix.
\item[3)] The case $a=1-p$, ~$b=2p-1$, ~$c=-p$ or $\alpha\delta+\beta\gamma=2p-1=-(\alpha\gamma+\beta\delta)$ (up to a transposition and a shift of eigenvalues) is
related to the Krawtchouk polynomials~\cite{Askey,Holtz.det}.
The corresponding eigenvectors were found in~\cite{Chu_Wang}.
\item[4)] The eigenvalues and eigenvectors for the case $b=-(c+a)$ or $\alpha\delta+\beta\gamma=-(\alpha\gamma+\beta\delta)$ (up to a shift of eigenvalues) were
found in~\cite{Fonseca_et_al}. This case covers the case $3)$. Note that the characteristic polynomial
of this matrix (up to a diagonal shift) was found by T.\,Muir~\cite[\S~576]{Muir}.
\item[5)] The eigenvalues of the matrix~\eqref{diff.op.classic.2.matrix.general} for arbitrary $a$, $b$, and $c$ were found in~\cite{Chu1}. The eigenvectors~\eqref{diff.op.classic.2.solution.coeff.general} of the matrix $B_{N}(a,b,c)$ are new.
\end{itemize}

As we mentioned in Section~\ref{section:intro}, all techniques in the aforementioned works are different from the one used here.
Thus, we generalise the results of the works~\cite{Askey,Holtz.det,Chu_Wang,Chu1,Fonseca_et_al} in a simple and a unified way.

Note that in the degenerated case $b^2=4ac$ (i.e. $D=0$) the matrix $B_{N}(a,b,c)$ has a unique eigenvalue with exactly one
eigenvector. In this case, the operator~\eqref{diff.op.classic.2.general.1} restricted to $\mathbb{C}_N[z]$ also has only
one eigenvalue with a unique polynomial eigenfunction for every fixed $N\in\mathbb{N}$, cf. Remark~\ref{rem.degenerate.case}.

\section{Discussion}\label{section:discussion}


The method applied in the present paper can be used to find the eigenvalues and eigenvectors of the tridiagonal
matrix whose entries are the recurrence relation coefficients for the Hahn polynomials. It was noticed by A.\,Kova\v{c}ec~\cite{Kovacec} that the
spectrum of this matrix was conjectured by E.\,Schr\"odinger in~\cite{Schroedinger}. A.\,Kova\v{c}ec gave a proof of Schr\"odinger's conjecture~\cite{Kovacec}. However,
R.\,Askey~\cite{Askey} and O.\,Holtz~\cite{Holtz.det} proved\footnote{In two different ways distinguished from Kova\v{c}ec's one.} this conjecture much earlier, while
W.\,Chu and X.\,Wang~\cite{Chu_Wang} found eigenvectors of the corresponding matrix. 
R.\,Oste and J.\,Van der Jeugt~\cite{OsteJeugt_2016,OsteJeugt_2017} recovered the results of R.\,Askey and O.\,Holtz with their own original method, and gave an orthogonal polynomial interpretation of the results of W.\,Chu and X.\,Wang.

Our approach allows us
to find the eigenvalues and eigenvectors of the Schr\"odinger matrix and to solve the generalised
eigenvalue problem for a pair of linear differential operators in a very simple and more constructive manner. We believe that the
results~\cite{Calogero_et_al_1,Chu2,Vaia.Spadini,OsteJeugt_2016,OsteJeugt_2017} can also be improved by a similar approach, but this study is to be a subject for another paper.

There is a number of related problems where our approach does not give an immediate result,
although other techniques prove to be very efficient. Appendix~\ref{section:appendix} of this
work employs two distinct methods to explain why the spectra of two astonishingly simple
tridiagonal matrices distinct from~\eqref{Sylvester.Kac.Matrix} are integer. These matrices were
studied in works~\cite{FonsecaKilic,FonKilPer20}, but we believe that our
Appendix~\ref{section:appendix} provides deeper understanding of their properties.

Finally, some other matrices related to the Sylvester-Kac matrix~\eqref{Sylvester.Kac.Matrix} are studied
in~\cite{Kilic2013,KilicArikan}. Our work~\cite{DyTy} not only finds spectra of those matrices
in a straightforward way, but also determines their eigenvectors. In fact, \cite{DyTy} expresses
solution to the eigenvalue problem for a general tridiagonal matrix with 2-periodic
main diagonal via the spectral data of the same matrix, in which the main diagonal is put to
zero.

\section{Acknowledgement}

The authors thank C.\,da Fonseca for pointing out to the works~\cite{Kilic2013,KilicArikan,FonsecaKilic,FonKilPer20}. A.\,Dyachenko was in part supported by the research fellowship DY 133/2-1 of the German Research Foundation~(DFG).

\appendix

\section{Concerning two special matrices studied by da Fonseca et al.}\label{section:appendix}

The authors of~\cite{FonsecaKilic} calculate the eigenvalues of the matrix\footnote{We notice that this matrix is a particular case of the one appeared in the work of A.\,Caley~\cite{Caley.1858}, see also~\cite[p.~429]{Muir.II} and~\cite[p.~355]{TausskyTodd}.}
\begin{equation}\label{Matrix.G}
G_N=
\begin{pmatrix}
   0   &   1   &0&\dots&   0    &   0    &   0   \\
  2N+2 &   0   &2&\dots&   0    &   0    &   0   \\
   0   & 2N+1  &0&\dots&   0    &   0    &   0   \\
\vdots&\vdots&\vdots&\ddots&\vdots&\vdots&\vdots\\
   0   &   0   &0&\dots& 0 &   N-1 &0\\
   0   &   0   &0&\dots&N+4&  0    &N\\
   0   &   0   &0&\dots&0&N+3 & 0    \\
\end{pmatrix}
\end{equation}
via a smart choice of the basis and an induction in size of the matrix. In an analogous way, the
work~\cite{FonKilPer20} deals with the matrix~$H_N=\frac 12 S_N$, where
\begin{equation}\label{Matrix.S}
S_N=
\begin{pmatrix}
   0   &   1   &0&\dots&   0    &   0    &   0   \\
  2N &   0   &2&\dots&   0    &   0    &   0   \\
   0   & 2N-1  &0&\dots&   0    &   0    &   0   \\
\vdots&\vdots&\vdots&\ddots&\vdots&\vdots&\vdots\\
   0   &   0   &0&\dots& 0 &   N-1 &0\\
   0   &   0   &0&\dots&N+2&  0    &2N\\
   0   &   0   &0&\dots&0&N+1 & 0
\end{pmatrix}
.
\end{equation}
The matrices~$G_N$ and~$S_N$ are the $(N+1)\times(N+1)$ leading
principal submatrices of the Sylvester-Kac matrix~\eqref{Sylvester.Kac.Matrix}: of~$K_{2N+2}$ and~$K_{2N}$, respectively, -- except that the~$(N,N+1)$th entry of~$S_N$ is doubled. It
turns out~\cite{FonsecaKilic,FonKilPer20} that the spectra\footnote{Here,~$\sigma(A)$ stands for the spectrum of a matrix~$A$. We remind the reader that in~\cite{FonKilPer20} the authors deal with $H_N=\frac 12 S_N$.} of these matrices are integer:
\begin{alignat}{3}\label{eq:G_n_H_n_spectrum}
    \sigma(S_{N})=\sigma(G_{N})&=\left\{2(2j-N)\right\}_{j=0}^N
    .
\end{alignat}
However, the method of~\cite{FonsecaKilic,FonKilPer20} provides little understanding of the
phenomenon. There is another reasoning relying on the properties of persymmetric Jacobi
matrices, which is in our opinion both shorter and deeper. A matrix is called persymmetric if
all its entries are symmetric with respect to the anti-diagonal.

Let~$D_{N}$ denote the~$(N+1)\times(N+1)$ diagonal matrix of the factorials,
$D_{N}=\big\|j!\,\delta_{i,j}\big\|_{i,j=0}^{N}$. Here $\delta_{i,j}$ stands for the Kronecker delta symbol. Direct computation shows that the
product~$D_{2N+2}K_{2N+2}D_{2N+2}^{-1}$ is a persymmetric matrix with a unit superdiagonal
. Moreover, its $(N+1)\times(N+1)$ leading principal submatrix is the matrix~$D_N G_N D_N^{-1}$ whose
spectrum coincides with the spectrum of~$G_N$. At the same time, \cite[eq.~(15),
Lemma~3.3]{GenestEtAl} implies that the spectrum of~$D_N G_N D_N^{-1}$ contains every second
spectral point of~$D_{2N+2}K_{2N+2}D_{2N+2}^{-1}$, or more specifically that~$\sigma(G_{N})=\sigma(D_{N}G_{N}D_{N}^{-1})$ satisfies~\eqref{eq:G_n_H_n_spectrum}.

Now, let~$I_N$ denote the~$(N+1)\times(N+1)$ identity matrix. Consider two polynomials
\[
    \Omega_1(x)=\det(xI_N-G_N) \quad\text{and}\quad
    \Omega_0(x)=\frac{\det(xI_{2N+2}-K_{2N+2})}{\Omega_1(x)}.
\]
From~\cite[eq.~(15), Lemma~3.3]{GenestEtAl} we additionally have
\[
    \det(xI_{N+1}-M_{N+1})  
    =\frac{\Omega_0(x)+x\Omega_1(x)}{2},
\]
where~$M_{N+1}$ is the $(N+2)\times(N+2)$ leading principal submatrix of~$K_{2N+2}$. Now, using
linearity of~$\det(xI_{N+1}-S_{N+1})$ with respect to the last column we obtain:
\[
    \det (xI_{N+1}-S_{N+1})
    =
    2 \det(xI_{N+1}-M_{N+1}) 
    -x \Omega_1(x)
    =
    \Omega_0(x)+x\Omega_1(x)-x\Omega_1(x)=\Omega_0(x),
\]
which yields~\eqref{eq:G_n_H_n_spectrum} for~$\sigma(S_{N})$. Thus, the main results
of~\cite{FonsecaKilic,FonKilPer20} rediscover some properties of persymmetric Jacobi matrices for the particular case of the Sylvester-Kac matrix.

\smallskip

On the other hand, the main results of~\cite{FonsecaKilic,FonKilPer20} may also be derived from the results of the works~\cite{Holtz.det,Askey,Chu_Wang}. Indeed, let us consider the matrix of the three-term recurrence relations for the Hahn polynomials with $\alpha=\beta$ 
(see~\cite[formula~(4.9)]{Askey}):
\begin{equation*}
C_N(\alpha)=
\begin{pmatrix}
b_0&a_0&0&\cdots&0&0\\
c_1&b_1&a_1&\cdots&0&0\\
0&c_2&b_2&\cdots&0&0\\
\vdots&\vdots&\vdots&\ddots&\vdots&\vdots\\
0&0&0&\cdots&b_{N-1}&a_{N-1}\\
0&0&0&\cdots&c_N&b_N\\
\end{pmatrix},    
\end{equation*}
where
\begin{equation*}
\begin{array}{ll}
a_0=\dfrac N2,\ \ a_i=\dfrac{(i+2\alpha+1)(N-i)}{2(2i+2\alpha+1)},\quad & i=1,\ldots,N-1,\\
\\
b_i=\dfrac{N}{2}, & i=0,1,\ldots,N,\\
\\
c_i=\dfrac{i(i+2\alpha+N+1)}{2(2i+2\alpha+1)}, & i=1,\ldots,N,
\end{array}
\end{equation*}
As is shown in~\cite{Askey,Holtz.det}, the spectrum of the matrix $C_N(\alpha)$ does not depend on $\alpha$ and has the form
\begin{equation}\label{Spectrum.C}
\sigma\big(C_N(\alpha)\big)=\{0,1,\ldots,N\}.
\end{equation}

With $S_N$ defined by~\eqref{Matrix.S}, the aforementioned matrix $H_N=\frac12 S_N$ is related to the matrix $C_N\left(-\frac12\right)$ as follows:
\begin{equation}\label{Relation.C.H}
H_N^{T}=E_N\left[2C_N\left(-\frac12\right)-N\cdot I_N\right]E_N^{-1},
\quad\text{where}\quad
E_N=\|\delta_{i,N-j}\|_{i,j=0}^{N}.
\end{equation}
In~\cite[Theorem~5.1]{Chu_Wang}, the authors found the eigenvectors for the matrix of the three-term recurrence relations corresponding to the Racah polynomials, which turn into the (right) eigenvectors of~$C_N(\alpha)$ on putting $\alpha=\beta$ and $\gamma\to\infty$. According to~\eqref{Relation.C.H}, reversing the order of entries in these vectors yields the transposed left eigenvectors of the matrix $H_N$. In other words, from~\eqref{Spectrum.C}--\eqref{Relation.C.H} and~\cite[Theorem~5.1]{Chu_Wang} for $\mathbf{u}_j H_N=\lambda_j \mathbf{u}_j$ we directly obtain
\begin{equation}\label{Sylvester-Kac.spectrum}
\lambda_j=2j-N,\qquad j=0,1,\ldots,N,
\end{equation}
with the entries of the row-vector 
$\mathbf{u}_j=(u_{0j},u_{1j},\ldots,u_{Nj})$ given by the following formula\footnote{We put $\alpha=\beta=-\frac12$ and $\gamma\to\infty$ in~\cite[Theorem~5.1]{Chu_Wang} and multiply the matrix of the right eigenvectors of $C\left(-\frac12\right)$ by the matrix $E_N$.}
\begin{equation*}\label{Matrix.H.left.E.Vect}
u_{kj}
=\sum\limits_{i=0}^{\min\{N-k,j\}}
  (-1)^{k+i}\binom{N-k}{i}\binom{N-i}{j-i}
  \cdot\dfrac{(N-k)_i}{(1/2)_i}
  ,\quad k,j=0,1,\ldots,N,
\end{equation*}
where $(a)_i=a(a+1)\cdots(a+i-1)$. Moreover, there is a simple formula expressing the right eigenvectors of a tridiagonal matrix via its left eigenvectors, see e.g.~\cite{DyTy}. Thus, the entries of the right eigenvector~$\mathbf{v}_j=(v_{0j},v_{1j},\ldots,v_{Nj})^T$ 
of $H_N$ corresponding to the $j$th 
eigenvalue~\eqref{Sylvester-Kac.spectrum}, $j=0,1,\ldots,N$, have the form
\begin{equation*}\label{Matrix.H.right.E.Vect}
    \begin{aligned}
        v_{kj}&=\binom{2N}{k}\sum\limits_{i=0}^{\min\{N-k,j\}}(-1)^{k+i}\binom{N-k}{i}\binom{N-i}{j-i}\cdot\dfrac{(N-k)_i}{(1/2)_i},\quad k=0,1,\ldots,N-1
        ,\\
        v_{Nj}&=\frac 12\binom{2N}{N}\binom{N}{j}.
    \end{aligned}
\end{equation*}

The matrix $G_N$ defined in~\eqref{Matrix.G} is related to the matrix $C_N\left(\frac12\right)$ in a similar manner, namely,
\begin{equation}\label{Relation.C.G}
G_N^{T}=R_N \left[4 C_N\left(\frac12\right)-2N I_N\right] R_N^{-1}
\end{equation}
with~$R_N=\|(i+1)\delta_{i,N-j}\|_{i,j=0}^{N}$. Now from~\eqref{Spectrum.C} and~\eqref{Relation.C.G} we obtain that the eigenvalues of $G_N$ are
\begin{equation}\label{Sylvester-Kac.spectrum2}
\lambda_j=2(2j-N), \quad j=0,1,\ldots,N.
\end{equation}
On letting $\alpha=\beta=\frac12$ and $\gamma\to\infty$ in~\cite[Theorem~5.1]{Chu_Wang}, we see that for~$j=0,1,\ldots,N$ the left eigenvector~$\mathbf{u}_j=(u_{0j},u_{1j},\ldots,u_{Nj})$ of~$G_N$ corresponding to the eigenvalue $\lambda_j$ has the following entries:
\begin{equation*}\label{Matrix.G.left.R.Vect}
u_{kj}
=\sum\limits_{i=0}^{\min\{N-k,j\}}
  (-1)^{k+i}\binom{N-k}{i}\binom{N-i}{j-i}
  \cdot\dfrac{(N+1-k)_{i+1}}{(3/2)_i}
  ,\quad k=0,1,\ldots,N.
\end{equation*}
Consequently, for~$k,j=0,1,\ldots,N$ the $k$th entry of the right eigenvector
$\mathbf{v}_j=(v_{0j},v_{1j},\ldots,v_{Nj})^T$ corresponding to the $j$th 
eigenvalue~\eqref{Sylvester-Kac.spectrum2} has the form
\begin{equation*}\label{Matrix.G.right.R.Vect}
v_{kj}=\binom{2N+2}{k}\sum\limits_{i=0}^{\min\{N-k,j\}}
  (-1)^{k+i}\binom{N-k}{i}\binom{N-i}{j-i}
  \cdot\dfrac{(N+1-k)_{i+1}}{(3/2)_i}.
\end{equation*}

Thus, the main results of the works~\cite{FonsecaKilic,FonKilPer20} on the spectra of the matrices~$G_N$ and~$H_N$ also follow from the results of the works~\cite{GenestEtAl} or~\cite{Askey,Holtz.det}, while the formulae for the entries of the left and right eigenvectors of~$G_N$ and~$H_N$ were actually found in~\cite{Chu_Wang}.

\setstretch{1.0}
\printbibliography\vspace{8pt}

@incollection {Askey,
    AUTHOR = {Richard   Askey},
title = {Evaluation of {S}ylvester type determinants using orthogonal polynomials},
booktitle = {Advances in Analysis},
 PUBLISHER = {World Sci. Publ., Hackensack, NJ},
pages = {1-16},
doi = {10.1142/9789812701732_0001},
      YEAR = {2005},
}

@article{Abdel-Rehim,
author = {Abdel-Rehim, E. A.},
title = {From the {E}hrenfest model to time-fractional stochastic processes},
year = {2009},
issue_date = {November, 2009},
publisher = {Elsevier Science Publishers B. V.},
address = {NLD},
volume = {233},
number = {2},
issn = {0377-0427},
url = {https://doi.org/10.1016/j.cam.2009.07.010},
doi = {10.1016/j.cam.2009.07.010},
journal = {J. Comput. Appl. Math.},
month = nov,
pages = {197–207},
numpages = {11},
keywords = {26A33, 80-99, Ehrenfest urn model, Central drift, 60J60, 60G15, Difference schemes, Time-fractional derivative, 65N06, Reversible processes, Diffusion processes, 60G50, 45K05, Stochastic processes}
}

@article{Bevilacqua.Bozzo,
title = {The {S}ylvester–{K}ac matrix space},
journal = {Linear Algebra and its Applications},
volume = {430},
number = {11},
pages = {3131-3138},
year = {2009},
issn = {0024-3795},
doi = {10.1016/j.laa.2009.01.029},
url = {https://www.sciencedirect.com/science/article/pii/S0024379509000779},
author = {Roberto Bevilacqua and Enrico Bozzo},
keywords = {Sylvester–Kac matrix, Tridiagonal matrix, Eigenvalues}
}

@book {Brouwer_Cohen_Neumaier,
    AUTHOR = {Brouwer, A. E. and Cohen, A. M. and Neumaier, A.},
     TITLE = {Distance-regular graphs},
    SERIES = {Ergebnisse der Mathematik und ihrer Grenzgebiete (3)},
    VOLUME = {18},
 PUBLISHER = {Springer-Verlag, Berlin},
      YEAR = {1989},
     PAGES = {xviii+495},
      ISBN = {3-540-50619-5},
  MRCLASS = {05-02 (05C75)},
  MRNUMBER = {1002568},
      DOI = {10.1007/978-3-642-74341-2}
}

@article{Calogero_et_al_1,
	doi = {10.1088/1751-8113/40/14/005},
	url = {https://doi.org/10.1088/1751-8113/40/14/005},
	year = 2007,
	month = {mar},
	publisher = {{IOP} Publishing},
	volume = {40},
	number = {14},
	pages = {3815--3829},
	author = {M Bruschi and F Calogero and R Droghei},
	title = {Proof of certain {D}iophantine conjectures and identification of remarkable classes of orthogonal polynomials},
	journal = {Journal of Physics A: Mathematical and Theoretical}
}

@article{Clement,
author = {Clement, Paul A.},
title = {A Class of Triple-Diagonal Matrices for Test Purposes},
journal = {SIAM Review},
volume = {1},
number = {1},
pages = {50-52},
year = {1959},
doi = {10.1137/1001006},

URL = { 
        https://doi.org/10.1137/1001006
}
}

@article{Chu1,
title = {{F}ibonacci polynomials and {S}ylvester determinant of tridiagonal matrix},
journal = {Applied Mathematics and Computation},
volume = {216},
number = {3},
pages = {1018-1023},
year = {2010},
issn = {0096-3003},
doi = {10.1016/j.amc.2010.01.089},
url = {https://www.sciencedirect.com/science/article/pii/S009630031000113X},
author = {Wenchang Chu},
keywords = {Fibonacci polynomials, Sylvester determinant, Tridiagonal matrix}
}

@article{Chu2,
title = {Spectrum and eigenvectors for a class of tridiagonal matrices},
journal = {Linear Algebra and its Applications},
volume = {582},
pages = {499-516},
year = {2019},
issn = {0024-3795},
doi = {10.1016/j.laa.2019.08.017},
url = {https://www.sciencedirect.com/science/article/pii/S002437951930357X},
author = {Wenchang Chu},
keywords = {Tridiagonal matrix, Determinant, Eigenvalue, Scalar product, Eigenvector, Orthogonal relation}
}

@article{Chu_Wang,
    AUTHOR = {Chu, Wenchang and Wang, Xiaoyuan},
     TITLE = {Eigenvectors of tridiagonal matrices of {S}ylvester type},
   JOURNAL = {Calcolo},
  FJOURNAL = {Calcolo. A Quarterly on Numerical Analysis and Theory of
              Computation},
    VOLUME = {45},
      YEAR = {2008},
    NUMBER = {4},
     PAGES = {217--233},
      ISSN = {0008-0624},
   MRCLASS = {65F15},
  MRNUMBER = {2473872},
MRREVIEWER = {Heike Fa\ss bender},
       DOI = {10.1007/s10092-008-0153-4},
       URL = {https://doi.org/10.1007/s10092-008-0153-4}
}

@inproceedings{Edelman_Kostlan.1994,
  author={A. Edelman and E. Kostlan},
  title={The road from {K}ac’s matrix to {K}ac’s random polynomials},
  pages={503-507},
  booktitle={Proceedings of the Fifth SIAM Conference on Applied Linear Algebra},
  editor={J. G. Lewis},
  year={1994},
  isbn={9780898713367},
  lccn={94019958},
  publisher={Society for Industrial and Applied Mathematics}
}

@article {Edelman_et_al,
    AUTHOR = {Edelman, Alan and Kostlan, Eric},
     TITLE = {How many zeros of a random polynomial are real?},
   JOURNAL = {Bull. Amer. Math. Soc. (N.S.)},
  FJOURNAL = {American Mathematical Society. Bulletin. New Series},
    VOLUME = {32},
      YEAR = {1995},
    NUMBER = {1},
     PAGES = {1--37},
      ISSN = {0273-0979},
   MRCLASS = {60G99 (30B20 42A05)},
  MRNUMBER = {1290398},
MRREVIEWER = {Kambiz Farahmand},
       DOI = {10.1090/S0273-0979-1995-00571-9},
       URL = {https://doi.org/10.1090/S0273-0979-1995-00571-9},
}

@article {Fernando,
    AUTHOR = {Fernando, K. V.},
     TITLE = {Computation of exact inertia and inclusions of eigenvalues
              (singular values) of tridiagonal (bidiagonal) matrices},
   JOURNAL = {Linear Algebra Appl.},
  FJOURNAL = {Linear Algebra and its Applications},
    VOLUME = {422},
      YEAR = {2007},
    NUMBER = {1},
     PAGES = {77--99},
      ISSN = {0024-3795},
   MRCLASS = {15A18},
  MRNUMBER = {2298997},
MRREVIEWER = {Michael J. Tsatsomeros},
       DOI = {10.1016/j.laa.2006.09.008},
       URL = {https://doi.org/10.1016/j.laa.2006.09.008},
}

@article {FonsecaKilic,
    AUTHOR = {da Fonseca, Carlos M. and K{\i}l{\i}\c{c}, Emrah},
     TITLE = {A new type of {S}ylvester--{K}ac matrix and its spectrum},
   JOURNAL = {Linear Multilinear Algebra},
  FJOURNAL = {Linear and Multilinear Algebra},
    VOLUME = {69},
      YEAR = {2021},
    NUMBER = {6},
     PAGES = {1072--1082},
      ISSN = {0308-1087},
   MRCLASS = {15B05 (15A15 15A18)},
  MRNUMBER = {4234715},
       DOI = {10.1080/03081087.2019.1620673},
       URL = {https://doi.org/10.1080/03081087.2019.1620673},
}

@article {FonKilPer20,
    AUTHOR = {da Fonseca, Carlos M. and K{\i}l{\i}\c{c}, Emrah and Pereira, Ant\'{o}nio},
     TITLE = {The interesting spectral interlacing property for a certain
              tridiagonal matrix},
   JOURNAL = {Electron. J. Linear Algebra},
  FJOURNAL = {Electronic Journal of Linear Algebra},
    VOLUME = {36},
      YEAR = {2020},
     PAGES = {587--598},
   MRCLASS = {15B05 (15A15 15A18)},
  MRNUMBER = {4148553},
       DOI = {10.13001/ela.2020.4945},
       URL = {https://doi.org/10.13001/ela.2020.4945},
}

@article {Fonseca_et_al,
    AUTHOR = {da Fonseca, C. M. and Mazilu, Dan A. and Mazilu, Irina and
              Williams, H. Thomas},
     TITLE = {The eigenpairs of a {S}ylvester-{K}ac type matrix associated
              with a simple model for one-dimensional deposition and
              evaporation},
   JOURNAL = {Appl. Math. Lett.},
  FJOURNAL = {Applied Mathematics Letters. An International Journal of Rapid
              Publication},
    VOLUME = {26},
      YEAR = {2013},
    NUMBER = {12},
     PAGES = {1206--1211},
      ISSN = {0893-9659},
   MRCLASS = {15A24 (34A30)},
  MRNUMBER = {3250435},
MRREVIEWER = {Oleg N. Kirillov},
       DOI = {10.1016/j.aml.2013.06.006},
       URL = {https://doi.org/10.1016/j.aml.2013.06.006},
}

@article{GenestEtAl,
title = {Persymmetric {J}acobi matrices, isospectral deformations and orthogonal polynomials},
journal = {Journal of Mathematical Analysis and Applications},
volume = {450},
number = {2},
pages = {915-928},
year = {2017},
issn = {0022-247X},
doi = {https://doi.org/10.1016/j.jmaa.2017.01.056},
url = {https://www.sciencedirect.com/science/article/pii/S0022247X17300896},
author = {Vincent X. Genest and Satoshi Tsujimoto and Luc Vinet and Alexei Zhedanov},
keywords = {Persymmetric matrices, Inverse spectral problems, Orthogonal polynomials, Isospectral deformations}
}

@article {Hess,
    AUTHOR = {Hess, F. G.},
     TITLE = {Alternative solution to the {E}hrenfest problem},
   JOURNAL = {Amer. Math. Monthly},
  FJOURNAL = {American Mathematical Monthly},
    VOLUME = {61},
      YEAR = {1954},
     PAGES = {323--328},
      ISSN = {0002-9890},
   MRCLASS = {60.0X},
  MRNUMBER = {61299},
MRREVIEWER = {R. Fortet},
       DOI = {10.2307/2307469},
       URL = {https://doi.org/10.2307/2307469},
}

@incollection {Holtz.det,
    AUTHOR = {Holtz, Olga},
     TITLE = {Evaluation of {S}ylvester type determinants using
              block-triangularization},
 BOOKTITLE = {Advances in analysis},
     PAGES = {395--405},
 PUBLISHER = {World Sci. Publ., Hackensack, NJ},
 doi = {10.1142/9789812701732_0036},
      YEAR = {2005},
   MRCLASS = {15A15 (33C45 33D45)},
  MRNUMBER = {2184955}
}

@article{Igelnik_Simon,
title = {The eigenvalues of a tridiagonal matrix in biogeography},
journal = {Applied Mathematics and Computation},
volume = {218},
number = {1},
pages = {195-201},
year = {2011},
issn = {0096-3003},
doi = {10.1016/j.amc.2011.05.054},
url = {https://www.sciencedirect.com/science/article/pii/S0096300311007247},
author = {Boris Igelnik and Dan Simon},
keywords = {Linear algebra, Biogeography, Biogeography-based optimization, Rational root theorem}
}

@article {Ikramov,
    AUTHOR = {Ikramov, Kh. D.},
     TITLE = {On a remarkable property of a matrix of {M}ark {K}ac},
   JOURNAL = {Mat. Zametki},
  FJOURNAL = {Matematicheskie Zametki},
    VOLUME = {72},
      YEAR = {2002},
    NUMBER = {3},
     PAGES = {356--362},
      ISSN = {0025-567X},
   MRCLASS = {15A57 (15A18 15A21)},
  MRNUMBER = {1963163},
MRREVIEWER = {Michail I. Karlov},
       DOI = {10.1023/A:1020543219652},
       URL = {https://doi.org/10.1023/A:1020543219652},
}

@article {Kac,
    AUTHOR = {Kac, Mark},
     TITLE = {Random walk and the theory of {B}rownian motion},
   JOURNAL = {Amer. Math. Monthly},
  FJOURNAL = {American Mathematical Monthly},
    VOLUME = {54},
      YEAR = {1947},
     PAGES = {369--391},
      ISSN = {0002-9890},
   MRCLASS = {60.0X},
  MRNUMBER = {21262},
MRREVIEWER = {M. Lo\`eve},
       DOI = {10.2307/2304386},
       URL = {https://doi.org/10.2307/2304386},
}

@article {Kilic2013,
    AUTHOR = {K{\i}l{\i}\c{c}, Emrah},
     TITLE = {Sylvester-tridiagonal matrix with alternating main diagonal
              entries and its spectra},
   JOURNAL = {Int. J. Nonlinear Sci. Numer. Simul.},
  FJOURNAL = {International Journal of Nonlinear Sciences and Numerical
              Simulation},
    VOLUME = {14},
      YEAR = {2013},
    NUMBER = {5},
     PAGES = {261--266},
      ISSN = {1565-1339},
   MRCLASS = {15B36 (15A18)},
  MRNUMBER = {3100989},
MRREVIEWER = {Dominique Fortin},
       DOI = {10.1515/ijnsns-2011-0068},
       URL = {https://doi.org/10.1515/ijnsns-2011-0068},
}

@article {KilicArikan,
    AUTHOR = {K{\i}l{\i}\c{c}, Emrah and Ar{\i}kan, Talha},
     TITLE = {Evaluation of spectrum of 2-periodic tridiagonal-{S}ylvester
              matrix},
   JOURNAL = {Turkish J. Math.},
  FJOURNAL = {Turkish Journal of Mathematics},
    VOLUME = {40},
      YEAR = {2016},
    NUMBER = {1},
     PAGES = {80--89},
      ISSN = {1300-0098},
   MRCLASS = {15B05},
  MRNUMBER = {3438786},
       DOI = {10.3906/mat-1503-46},
       URL = {https://doi.org/10.3906/mat-1503-46},
}

@book {Muir,
 Author = {Th. {Muir}},
 Title = {{A treatise on the theory of determinants}},
 Year = {2003},
 Language = {English},
 Publisher = {{Dover Publications, New York}},
 note={Reprinted}
}

@book {Muir.II,
    AUTHOR = {Muir, Thomas},
     TITLE = {The theory of determinants in the historical order of development},
      volume={II. The period 1841 to 1860},
      PUBLISHER = {Macmillan \& Co.,  London},
      YEAR = {1911}
}

@article {Munarini_Torri,
    AUTHOR = {Munarini, Emanuele and Torri, Damiano},
     TITLE = {Cayley continuants},
   JOURNAL = {Theoret. Comput. Sci.},
  FJOURNAL = {Theoretical Computer Science},
    VOLUME = {347},
      YEAR = {2005},
    NUMBER = {1-2},
     PAGES = {353--369},
      ISSN = {0304-3975},
   MRCLASS = {05A15 (05A40 15A15)},
  MRNUMBER = {2185724},
MRREVIEWER = {Kent E. Morrison},
       DOI = {10.1016/j.tcs.2005.06.034},
       URL = {https://doi.org/10.1016/j.tcs.2005.06.034},
}

@article {Rosza,
    AUTHOR = {R\'{o}zsa, P\'{a}l},
     TITLE = {Remarks on the spectral decomposition of a stochastic matrix},
   JOURNAL = {Magyar Tud. Akad. Mat. Fiz. Oszt. K\"{o}zl},
    VOLUME = {7},
      YEAR = {1957},
     PAGES = {199--206},
   MRCLASS = {15.00 (60.00)},
  MRNUMBER = {0096691}
}

@Article{Schroedinger,
 Author = {E. {Schr\"odinger}},
 Title = {{Quantisierung als Eigenwertproblem. III}},
 FJournal = {{Annalen der Physik, IV. Folge}},
 Journal = {{Ann. der Phys. (4)}},
 ISSN = {0003-3804},
 Volume = {80},
 Pages = {437--490},
 Year = {1926},
 Publisher = {Johann Ambrosius Barth, Leipzig},
 Language = {German},
 Zbl = {52.0966.02}
}

@article{Siegert,
  title = {On the Approach to Statistical Equilibrium},
  author = {Siegert, Arnold J. F.},
  journal = {Phys. Rev.},
  volume = {76},
  issue = {11},
  pages = {1708--1714},
  year = {1949},
  month = {Dec},
  publisher = {American Physical Society},
  doi = {10.1103/PhysRev.76.1708},
  url = {https://link.aps.org/doi/10.1103/PhysRev.76.1708}
}

@article{TausskyTodd,
title = {Another look at a matrix of {M}ark {K}ac},
journal = {Linear Algebra and its Applications},
volume = {150},
pages = {341-360},
year = {1991},
issn = {0024-3795},
 DOI = {10.1016/0024-3795(91)90179-Z},
url = {https://www.sciencedirect.com/science/article/pii/002437959190179Z},
author = {Olga Taussky and John Todd},
   MRCLASS = {15A18 (15A57)},
  MRNUMBER = {1102076}
}

@article{Sylvester,
     author = {James Joseph Sylvester},
     title = {Th\'eor\`eme sur les d\'eterminants de {M}. {S}ylvester},
     journal = {Nouvelles annales de math\'ematiques: journal des candidats aux \'ecoles polytechnique et normale},
     pages = {305--305},
     publisher = {Carilian-Goeury et Vor Dalmont},
     volume = {1e s{\'e}rie, 13},
     year = {1854},
     language = {fr}
}

@Article{Vincze,
 Author = {I. {Vincze}},
 Title = {{\"Uber das Ehrenfestsche Modell der W\"arme\"ubertragung}},
 FJournal = {{Archiv der Mathematik}},
 Journal = {{Arch. Math.}},
 ISSN = {0003-889X; 1420-8938/e},
 Volume = {15},
 Pages = {394--400},
 Year = {1964},
 Publisher = {Springer (Birkh\"auser), Basel},
 Language = {German},
 Zbl = {0122.13704}
}

@article {Vaia.Spadini,
    AUTHOR = {Vaia, Ruggero and Spadini, Lidia},
     TITLE = {Persymmetric {J}acobi matrices with square-integer eigenvalues
              and dispersionless mass-spring chains},
   JOURNAL = {Linear Algebra Appl.},
  FJOURNAL = {Linear Algebra and its Applications},
    VOLUME = {585},
      YEAR = {2020},
     PAGES = {164--177},
      ISSN = {0024-3795},
   MRCLASS = {15A18 (15A29 47B36 70F17)},
  MRNUMBER = {4018336},
       DOI = {10.1016/j.laa.2019.10.002},
       URL = {https://doi.org/10.1016/j.laa.2019.10.002},
}

@article{DyTy,
  title = {On a property of tridiagonal matrices with zero main diagonal},
  author = {Alexander Dyachenko and Mikhail Tyaglov},
  journal = {in preparation},
  year = {2021},
}

@article {Kovacec,
    AUTHOR = {Kova\v{c}ec, Alexander},
     TITLE = {Schr\"{o}dinger's tridiagonal matrix},
   JOURNAL = {Spec. Matrices},
  FJOURNAL = {Special Matrices},
    VOLUME = {9},
      YEAR = {2021},
     PAGES = {149--165},
   MRCLASS = {15A15 (15B99 47B36)},
  MRNUMBER = {4232301},
       DOI = {10.1515/spma-2020-0124},
       URL = {https://doi.org/10.1515/spma-2020-0124},
}

@article {OsteJeugt_2017,
    AUTHOR = {Oste, Roy and Van der Jeugt, Joris},
     TITLE = {A Finite Quantum Oscillator Model Related to Special Sets
of Racah Polynomials},
   JOURNAL = {Physics of Atomic Nuclei},
  FJOURNAL = {Physics of Atomic Nuclei},
    VOLUME = {80},
      YEAR = {2017},
    NUMBER = {4},
     PAGES = {786--793},
      ISSN = {1063-7788},
}

@article {OsteJeugt_2016,
    AUTHOR = {Oste, Roy and Van der Jeugt, Joris},
     TITLE = {Doubling (dual) {H}ahn polynomials: classification and
              applications},
   JOURNAL = {SIGMA Symmetry Integrability Geom. Methods Appl.},
  FJOURNAL = {SIGMA. Symmetry, Integrability and Geometry. Methods and
              Applications},
    VOLUME = {12},
      YEAR = {2016},
     PAGES = {Paper No. 003, 27},
   MRCLASS = {33C45 (33C80 81Q65 81R05)},
  MRNUMBER = {3440191},
       DOI = {10.3842/SIGMA.2016.003},
       URL = {https://doi.org/10.3842/SIGMA.2016.003},
}

@article {Caley.1858,
    AUTHOR = {Caley, Arthur},
     TITLE = {On the determination of the value of a certain determinant},
   JOURNAL = {Quart. Math. J.},
  FJOURNAL = {The Quarterly Journal of Pure and Applied Mathematics},
    VOLUME = {2},
      YEAR = {1858},
     PAGES = {163--166},
}

\setlength{\parskip}{-7pt}
\end{document}